\theoremstyle{plain}
\newtheorem{theorem}{Theorem}[section]
\newtheorem{corollary}[theorem]{Corollary}
\newtheorem{lemma}[theorem]{Lemma}
\theoremstyle{definition}
\newtheorem{definition}{Definition}[section]
\theoremstyle{notation}
\theoremstyle{remark}
\theoremstyle{note}
\numberwithin{equation}{section}
\begin{document}


\title[Existence of fixed points ...]{Existence of fixed points for pairs of mappings and application to Urysohn integral equations}

\author[D. Kumar]{Deepak Kumar$^1$}

\address{$^1$Department of Mathematics,
Lovely Professional University, Phagwara,
Punjab-144411, India.\\\newline
Research Scholar, IKG Punjab Technical University, Jalandhar-Kapurthala Highway, Kapurthala-144603, Punjab, India}
\email{\textcolor[rgb]{0.00,0.00,0.84}{deepakanand@live.in}}

\author[S. Chandok]{Sumit Chandok$^2$}


\address{$^2$School of Mathematics, Thapar University, Patiala-147004, Punjab, India.}
\email{\textcolor[rgb]{0.00,0.00,0.84}{sumit.chandok@thapar.edu, chandhok.sumit@gmail.com}}

\keywords{$C$-Complete complex valued metric space, Common fixed point, weakly compatible mappings, Urysohn integral equations}

\subjclass[2010]{47H10; 54H25.}


\footnotetext{Preprint submitted to Journal ... \hfill \today}

\begin{abstract}
In this paper, we establish some common fixed point results for two pairs of weakly compatible mappings in the setting of $C$-complex valued metric space. Also, as application of the proved result, we obtain the existence and uniqueness of a common solution of the system of the Urysohn integral equations:
\begin{eqnarray*}
x(t)=\psi_i(t)+\int_{a}^{b}K_i(t,s,x(s))ds
\end{eqnarray*}
where $i=1, 2, 3, 4, a,b\in \mathbb{R}$ with $a\leq b, t\in [a,b], x, \psi_i\in C([a,b],\mathbb{R}^n)$ and $K_i:[a,b]\times [a,b]\times \mathbb{R}^n\rightarrow \mathbb{R}^n$ is a mapping for each $i=1, 2, 3, 4$.
\end{abstract}

\maketitle

\section{Introduction}
The notion of complex valued metric space which is more general than the metric space was introduced by Azam et at. (\cite{r1}). Afterwards many authors studied the existence and uniqueness of common fixed point and fixed point of self mappings with different contractive conditions in complex valued metric space (\cite{r1}-\cite{r4}, \cite{r5}-\cite{r7}, \cite{r10}, \cite{r11}). Recently, Sintunavarat et al. (\cite{r2}-\cite{r11}) introduced the concepts of $C$-Cauchy sequence and C-complete in complex-valued metric spaces and established the existence of common fixed point theorems in $C$-complete complex-valued metric spaces. They have applied the obtained results to check the existence and uniqueness of common solution of the integral equations.

The aim of this manuscript is to establish the common fixed point theorems for two pairs of weakly compatible mappings satisfying some contractive conditions in the framework of $C$-complex valued metric spaces. Also, as application of the proved results, we obtain the existence and uniqueness of a common solution of the system of the Urysohn integral equations:
\begin{eqnarray*}
x(t)=\psi_i(t)+\int_{a}^{b}K_i(t,s,x(s))ds
\end{eqnarray*}
where $i=1, 2, 3, 4, a,b\in \mathbb{R}$ with $a\leq b, t\in [a,b], x, \psi_i\in C([a,b],\mathbb{R}^n)$ and $K_i:[a,b]\times [a,b]\times \mathbb{R}^n\rightarrow \mathbb{R}^n$ is a mapping for each $i=1, 2, 3, 4$.\\

\section{Preliminaries}

Let $\mathbb{C}$ be the set of complex numbers. For $z_1, z_2 \in \mathbb{C},$ we will define a partial order $\preceq $ on $\mathbb{C}$ as follows:
\begin{eqnarray*}
z_1\preceq z_2 \Longleftrightarrow Re(z_1)\leq Re(z_2) \textit{ and } Im(z_1)\leq Im(z_2)
\end{eqnarray*}
We note that $z_1\preceq z_2$ if one of the following holds:\\
$(C_1)$ $Re(z_1)=Re(z_2)$ and $Im(z_1)=Im(z_2)$;\\
$(C_2)$ $Re(z_1)<Re(z_2)$ and $Im(z_1)=Im(z_2)$;\\
$(C_3)$ $Re(z_1)=Re(z_2)$ and $Im(z_1)<Im(z_2)$;\\
$(C_4)$ $Re(z_1)<Re(z_2)$ and $Im(z_1)<Im(z_2)$;\\

It is obvious that if $a,b\in \mathbb{R}$ such that $a\preceq b$, then $az\preceq bz$ for all $z\in \mathbb{C}$.\\

In particular, we write $z_1\precnsim z_2$ if $z_1\neq z_2$ and one of $(C_2), (C_3), (C_4)$ is satisfied, and we write $z_1\prec z_2$ if and only if $(C_4)$ is satisfied. The following are well known:\\
\begin{eqnarray*}
0\preceq z_1 \precnsim z_2 \Rightarrow |z_1|<|z_2|,
\end{eqnarray*}
\begin{eqnarray*}
z_1\preceq z_2, z_2\prec z_3 \Rightarrow z_1\prec z_3
\end{eqnarray*}

The following definitions and results will be needed in the sequel.

\begin{definition}\cite{r1}
Let $X$ be a nonempty set. Suppose that the mapping $d:X\times X\rightarrow \mathbb{C}$
satisfies the following conditions:\\
$(i)$ $0\preceq d(x,y)$ for all $x,y\in X$ and $d(x,y)=0$ if and only if $x=y$;\\
$(ii)$ $d(x,y)=d(y,x)$ for all $x,y\in X$;\\
$(iii)$ $d(x,y)\preceq d(x,z)+d(z,y)$ for all $x,y,z \in X$.\\
Then $d$ is called a complex-valued metric on $X$ and $(X,d)$ is called a complex valued metric space.
\end{definition}

\begin{definition}\cite{r1}
Let $(X,d)$ be a complex-valued metric space.\\
$(1):$ A point $x\in X$ is called an interior point of a set $A\subseteq X$ wherever there exits $0\prec r\in \mathbb{C}$ such that
\begin{eqnarray*}
B(x,r)=\{y\in X: d(x,y)\prec r\}\subseteq A.
\end{eqnarray*}
$(2):$ A point $x\in X$ is called a limit point of $A$ whenever, for all $0\prec r\in \mathbb{C}$,
\begin{eqnarray*}
B(x,r)\cap (A-X)\neq \varnothing
\end{eqnarray*}
$(3):$ A set $A\subseteq X$ is called open whenever each element of $A$ is an interior point of $A.$\\
$(4):$ A set $A\subset X$ is called closed whenever each limit point of $A$ belongs to $A.$\\
$(5):$ A sub-basis for a Hausdorff topology $\tau$ on $X$ is the family
\begin{eqnarray*}
F=\{B(x,r): x\in X \textit{ and } 0\prec r\}.
\end{eqnarray*}
\end{definition}

For more concepts such as limit point, convergence and Cauchy sequence in complex valued metric spaces (see \cite{r1,r4,r2} and references cited therein).


\begin{definition}\cite{r2}
	Let $(X,d)$ be a complex-valued metric space and $\{x_n\}$ be a sequence in $X$.\\
	$(i)$ If, for any $c\in \mathbb{C}$ with $0\prec c$, there exists $N\in \mathbb{N}$ such that, for all $m,n>N$, $d(x_n,x_m)\prec c$, then $\{x_n\}$ is called a C-Cauchy sequence in $X$.\\
	$(ii)$ If every C-Cauchy sequence in $X$ is convergent, then $(X,d)$ is said to be a $C$-complete complex valued metric space.
\end{definition}
For more concepts in $C$-complete complex valued metric space see \cite{r2}.

\begin{definition}
	Let $S$ and $T$ be self mappings of a nonempty set $X.$\\
	$(1):$ A point $x\in X$ is called a fixed point of $T$ if $Tx=x.$\\
	$(2):$ A point $x\in X$ is called a coincidence point of $S$ and $T$ if $Sx=Tx$ and the point $w\in X$ such that $w=Sx=Tx$ is called a point of coincidence of $S$ and $T.$\\
	$(3):$ A point $x\in X$ is called a common fixed point of $S$ and $T$ if $x=Sx=Tx.$
\end{definition}

\begin{definition}\cite{r12}
Let $X$ be a complex valued metric space. Then a pair of self mappings $S,T:X\rightarrow X$ is said to be weakly compatible if they commute at their coincidence points.
\end{definition}

The following results of \cite{r1} will be used in the sequel.

\begin{lemma}\cite{r1}
Let $(X,d)$ be a complex valued metric space, $\{x_n\}$ be a sequence in $X.$ Then $\{x_n\}$ converges to a point $x\in X$ if and only if $|d(x_n,x)|\rightarrow 0$ as $n\rightarrow \infty.$
\end{lemma}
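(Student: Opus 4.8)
The plan is to unwind both the definition of convergence in $(X,d)$ and the hypothesis $|d(x_n,x)|\to 0$ into statements about $\mathrm{Re}(d(x_n,x))$ and $\mathrm{Im}(d(x_n,x))$, and then pass back and forth between the partial order and the modulus using the elementary bounds $|\mathrm{Re}(z)|\le |z|$ and $|\mathrm{Im}(z)|\le |z|$ together with the recorded property $0\preceq z_1\precnsim z_2\Rightarrow|z_1|<|z_2|$. Recall (following \cite{r1}) that $\{x_n\}\to x$ means that for every $c\in\mathbb{C}$ with $0\prec c$ there is $N\in\mathbb{N}$ with $d(x_n,x)\prec c$ for all $n>N$. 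Since $0\prec c$ is precisely condition $(C_4)$, this is the same as saying $\mathrm{Re}(c)>0$ and $\mathrm{Im}(c)>0$, a fact I will use repeatedly.

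For the forward implication, suppose $\{x_n\}\to x$ and fix $\varepsilon>0$. I would feed the convergence definition the specific test element $c=\tfrac{\varepsilon}{\sqrt 2}+i\tfrac{\varepsilon}{\sqrt 2}$, which satisfies $0\prec c$ and $|c|=\varepsilon$. This produces an $N$ with $d(x_n,x)\prec c$ for all $n>N$. Combining this with $0\preceq d(x_n,x)$ and noting that $\prec$ (condition $(C_4)$) forces $d(x_n,x)\neq c$ and hence $\precnsim$, we get $0\preceq d(x_n,x)\precnsim c$, so the quoted property yields $|d(x_n,x)|<|c|=\varepsilon$ for all $n>N$. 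As $\varepsilon>0$ was arbitrary, $|d(x_n,x)|\to 0$. For the converse, suppose $|d(x_n,x)|\to 0$ and fix any $c$ with $0\prec c$. I would put $\delta=\min\{\mathrm{Re}(c),\mathrm{Im}(c)\}>0$ and use the hypothesis to find $N$ with $|d(x_n,x)|<\delta$ for all $n>N$. Then $\mathrm{Re}(d(x_n,x))\le|d(x_n,x)|<\delta\le\mathrm{Re}(c)$ and likewise $\mathrm{Im}(d(x_n,x))<\mathrm{Im}(c)$, which is exactly $(C_4)$, i.e. $d(x_n,x)\prec c$. Hence $\{x_n\}\to x$, completing the equivalence.

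There is no serious obstacle here; the argument is essentially bookkeeping and follows the pattern of \cite{r1}. The only step that requires a little care is the translation between the order $\prec$ and the modulus: one direction rests on the fact that $|d(x_n,x)|$ simultaneously dominates both $\mathrm{Re}(d(x_n,x))$ and $\mathrm{Im}(d(x_n,x))$, while the other rests on turning strict domination in both coordinates into a strict inequality of moduli through $0\preceq z_1\precnsim z_2\Rightarrow|z_1|<|z_2|$. Choosing a test element $c$ whose modulus is exactly the prescribed $\varepsilon$ (and whose real and imaginary parts are both positive) is what makes the forward estimate clean.
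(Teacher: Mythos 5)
Your argument is correct: the forward direction via the test element $c=\tfrac{\varepsilon}{\sqrt 2}+i\tfrac{\varepsilon}{\sqrt 2}$ together with $0\preceq z_1\precnsim z_2\Rightarrow|z_1|<|z_2|$, and the converse via $\delta=\min\{\mathrm{Re}(c),\mathrm{Im}(c)\}$ and the bounds $\mathrm{Re}(z)\le|z|$, $\mathrm{Im}(z)\le|z|$, are exactly what is needed. The paper itself gives no proof --- it quotes this lemma from Azam et al.\ \cite{r1} --- and your argument reproduces the standard one from that source, so there is nothing to correct.
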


\begin{lemma}\cite{r1}
Let $(X,d)$ be a complex valued metric space, $\{x_n\}$ be a sequence in $X.$ Then $\{x_n\}$ is a Cauchy sequence if and only if $|d(x_n,x_{n+m})|\rightarrow 0 $ as $n\rightarrow \infty $, where $m\in \mathbb{N}$.
\end{lemma}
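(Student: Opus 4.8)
The plan is to prove the two implications separately, the central mechanism in both being the translation between the partial order $\preceq$ and the usual modulus $|\cdot|$ on $\mathbb{C}$. For this I would rely on the two facts recorded just before Definition~2.1, namely that $0\preceq z_1\precnsim z_2$ forces $|z_1|<|z_2|$, together with the explicit description of $\prec$ through condition $(C_4)$.

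For the forward direction, suppose $\{x_n\}$ is Cauchy and fix $\varepsilon>0$. I would choose the test element $c=\tfrac{\varepsilon}{\sqrt 2}+i\tfrac{\varepsilon}{\sqrt 2}$, which satisfies $0\prec c$ and $|c|=\varepsilon$. By the definition of a Cauchy sequence there is an $N$ with $d(x_n,x_{n+m})\prec c$ for all $n>N$ and all $m\in\mathbb{N}$; since $0\preceq d(x_n,x_{n+m})$ always holds, the quoted implication yields $|d(x_n,x_{n+m})|<|c|=\varepsilon$, and hence $|d(x_n,x_{n+m})|\to 0$.

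For the converse, suppose $|d(x_n,x_{n+m})|\to 0$ and let $0\prec c$ be arbitrary. Put $\delta=\min\{\operatorname{Re}(c),\operatorname{Im}(c)\}>0$ and pick $N$ so that $|d(x_n,x_{n+m})|<\delta$ for all $n>N$ and all $m$. Writing $d=d(x_n,x_{n+m})$, the inequalities $\operatorname{Re}(d)\le|d|<\delta\le\operatorname{Re}(c)$ and $\operatorname{Im}(d)\le|d|<\delta\le\operatorname{Im}(c)$, which are valid because $d\succeq 0$ makes both the real and imaginary parts nonnegative, are exactly condition $(C_4)$. Thus $d\prec c$, and so $\{x_n\}$ is Cauchy.

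The only genuinely delicate point is the quantifier on $m$: the notion of Cauchy here demands a single $N$ that works for every $m$ simultaneously, so the hypothesis $|d(x_n,x_{n+m})|\to 0$ must be read as convergence that is uniform in $m$, i.e. $\sup_m|d(x_n,x_{n+m})|\to 0$. I would make this reading explicit at the outset; once it is fixed, both directions reduce to bookkeeping on the real and imaginary parts, and no further estimates are required.
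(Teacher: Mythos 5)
Your proof is correct. Note that the paper itself gives no argument for this lemma: it is quoted verbatim from reference \cite{r1} as a preliminary, so there is no in-paper proof to compare against. Your two-way translation between the partial order $\preceq$ and the modulus $|\cdot|$ (using $0\preceq z_1\precnsim z_2\Rightarrow|z_1|<|z_2|$ in one direction and the componentwise bound $\operatorname{Re}(d)\le|d|$, $\operatorname{Im}(d)\le|d|$ for $d\succeq 0$ in the other) is exactly the standard argument from the source, and you are right to flag that the hypothesis $|d(x_n,x_{n+m})|\to 0$ must be read uniformly in $m$ --- with a pointwise-in-$m$ reading the converse is false (e.g.\ partial sums of the harmonic series in $\mathbb{R}\subset\mathbb{C}$), so making that reading explicit is a genuine improvement on the statement as printed.
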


\section{Main Results}
Throughout this paper, $\mathbb{R}$ denotes a set of real numbers, $\mathbb{C}_+$ denotes the set $\{c\in \mathbb{C}:0\preceq c\}$ and
$\Gamma$ denotes the class of all functions $\gamma:\mathbb{C}_+\times \mathbb{C}_+\rightarrow [0,1)$ which satisfies the following condition:
\begin{center}
	for any $(x_n,y_n)$ in $\mathbb{C}_+\times\mathbb{C}_+$, $\gamma(x_n,y_n)\rightarrow 1 \Rightarrow (x_n,y_n)\rightarrow 0$.
\end{center}

Now, we prove the main results.

\begin{theorem}\label{thm31}
Let $(X,d)$ be a C-complete complex valued metric space and $f,g,S,T:X\rightarrow X$ be four self mappings. If there exists three mappings $\lambda_1$, $\lambda_2$, $\lambda_3 : \mathbb{C}_+\times\mathbb{C}_+  \rightarrow [0,1)$, such that for all $x,y\in X$ the following axioms holds:\\
\begin{enumerate}
  \item  $\lambda_1(x,y)+2\lambda_2(x,y)+2\lambda_3(x,y)<1$;\\
  \item the mapping $\gamma : \mathbb{C}_+\times\mathbb{C}_+\rightarrow [0,1)$ defined by $\gamma(x,y)=\frac{\lambda_1(x,y)+\lambda_2(x,y)+\lambda_3(x,y)}{1-\lambda_2(x,y)-\lambda_3(x,y)}$ belongs to $\Gamma$;\\
  \item $d(Sx,Ty)\preceq \lambda_1(fx,gy)d(fx,gy)+\lambda_2(fx,gy)(d(fx,Sx)+d(gy,Ty))+\lambda_3(fx,gy)(d(fx,Ty)+d(gy,Sx))$;\\
  \item $T(X)\subseteq f(X)$ and $S(X)\subseteq g(X)$ and the pairs $(f,S)$ and $(g,T)$ are weakly compatible\\
\end{enumerate}

Then $f$, $g$, $S$ and $T$ have a unique common fixed point in $X$.

\end{theorem}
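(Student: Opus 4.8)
The plan is to run the standard Jungck iteration adapted to the four maps, with the Geraghty‑type control furnished by the class $\Gamma$. First I would fix $x_0\in X$ and use the range inclusions $S(X)\subseteq g(X)$ and $T(X)\subseteq f(X)$ to build $\{x_n\}$ together with the auxiliary sequence $\{y_n\}$ given by $y_{2n}=Sx_{2n}=gx_{2n+1}$ and $y_{2n+1}=Tx_{2n+1}=fx_{2n+2}$ for $n\geq 0$. Applying axiom (3) with $x=x_{2n}$, $y=x_{2n+1}$ and substituting these identities (the arguments of each $\lambda_i$ being $(fx_{2n},gx_{2n+1})$) gives
\[
d(y_{2n},y_{2n+1})\preceq \lambda_1\, d(y_{2n-1},y_{2n})+\lambda_2\big(d(y_{2n-1},y_{2n})+d(y_{2n},y_{2n+1})\big)+\lambda_3\, d(y_{2n-1},y_{2n+1}).
\]
Using $d(y_{2n},y_{2n})=0$, the triangle inequality $d(y_{2n-1},y_{2n+1})\preceq d(y_{2n-1},y_{2n})+d(y_{2n},y_{2n+1})$, and collecting terms, I would rearrange to
\[
(1-\lambda_2-\lambda_3)\,d(y_{2n},y_{2n+1})\preceq(\lambda_1+\lambda_2+\lambda_3)\,d(y_{2n-1},y_{2n}),
\]
and divide by the positive real number $1-\lambda_2-\lambda_3$ (positive by axiom (1)) to obtain $d(y_{2n},y_{2n+1})\preceq \gamma(fx_{2n},gx_{2n+1})\,d(y_{2n-1},y_{2n})$. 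A symmetric application with $x=x_{2n+2}$, $y=x_{2n+1}$ handles the odd step, so in every case $|d(y_n,y_{n+1})|\leq \gamma_n\,|d(y_{n-1},y_n)|$ with $\gamma_n\in[0,1)$.

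Since each $\gamma_n<1$, the real sequence $h_n:=|d(y_n,y_{n+1})|$ is non‑increasing and bounded below, hence converges to some $h\geq 0$. If $h>0$, then $h_n/h_{n-1}\to 1$ forces $\gamma_n\to 1$, and the property defining $\Gamma$ in axiom (2) drives the controlling distances $d(y_{n-1},y_n)$ to $0$, contradicting $h>0$; hence $h=0$. The hard part will be upgrading this to the $C$‑Cauchy property, since $\Gamma$ supplies no uniform contraction constant. I would argue by contradiction: if $\{y_n\}$ were not $C$‑Cauchy, there would exist $\varepsilon>0$ and indices $n_k>m_k>k$, chosen of prescribed parities and with $n_k$ minimal, so that $|d(y_{m_k},y_{n_k})|\geq\varepsilon$ while $|d(y_{m_k},y_{n_k-1})|<\varepsilon$. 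Using $h_n\to 0$ and repeated triangle inequalities, one shows that $|d(y_{m_k},y_{n_k})|$ and several index‑shifted companions all tend to $\varepsilon$; feeding the pair $(x_{m_k},x_{n_k})$ into axiom (3) then yields $\varepsilon\leq\gamma(\,\cdot\,,\,\cdot\,)\,\varepsilon$ in the limit, so $\gamma\to 1$ along this subsequence and the $\Gamma$‑property forces the associated distances, and hence $\varepsilon$, to $0$ — a contradiction. Thus $\{y_n\}$ is $C$‑Cauchy and, by $C$‑completeness, converges to some $z\in X$, so all four subsequences $Sx_{2n},\,gx_{2n+1},\,Tx_{2n+1},\,fx_{2n+2}$ converge to $z$.

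Next I would locate coincidence points. Since $z=\lim fx_{2n+2}$ with $fx_{2n+2}\in f(X)$ and $z=\lim gx_{2n+1}$ with $gx_{2n+1}\in g(X)$, choose $u,v\in X$ with $fu=z$ and $gv=z$. Putting $x=u$, $y=x_{2n+1}$ in axiom (3), letting $n\to\infty$, and using $gx_{2n+1}\to z$, $Tx_{2n+1}\to z$, $fu=z$, the terms carrying a factor $d(z,y_{2n})$, $d(y_{2n},y_{2n+1})$ or $d(z,y_{2n+1})$ vanish, leaving $|d(Su,z)|\leq(\lambda_2+\lambda_3)|d(Su,z)|$; since $\lambda_2+\lambda_3<\tfrac12$ by axiom (1), this forces $Su=z$, so $fu=Su=z$. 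A parallel computation with $x=x_{2n}$, $y=v$ gives $gv=Tv=z$. Weak compatibility of $(f,S)$ and of $(g,T)$ at these coincidence points then yields $fz=fSu=Sfu=Sz$ and $gz=gTv=Tgv=Tz$.

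Finally I would show $z$ is fixed. Applying axiom (3) with $x=z$, $y=v$ and using $Tv=gv=z$ together with $fz=Sz$ collapses the right‑hand side to $(\lambda_1+2\lambda_3)\,d(Sz,z)$, and since $\lambda_1+2\lambda_3<1$ by axiom (1) we get $Sz=z$, hence $fz=z$; the choice $x=z$, $y=z$ (now with $Sz=fz=z$ and $gz=Tz$) similarly gives $Tz=z=gz$. Thus $z=fz=gz=Sz=Tz$ is a common fixed point. For uniqueness, if $z'$ is another common fixed point, then axiom (3) with $x=z$, $y=z'$ reduces to $d(z,z')\preceq(\lambda_1+2\lambda_3)\,d(z,z')$, whence $|d(z,z')|=0$ and $z=z'$. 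The only genuinely delicate point is the $C$‑Cauchy step, where the absence of a uniform contraction constant makes the $\Gamma$‑property together with the minimal‑index subsequence selection indispensable.
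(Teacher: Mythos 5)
Your proposal is correct and follows essentially the same route as the paper: the Jungck-type alternating iteration, the recursive estimate $|d(y_n,y_{n+1})|\le\gamma_n|d(y_{n-1},y_n)|$ combined with the $\Gamma$-property to kill the limit, the contradiction argument with minimal-index subsequences for the $C$-Cauchy step, coincidence points from the range inclusions, weak compatibility, and the $(\lambda_1+2\lambda_3)$-estimate for the fixed-point and uniqueness steps. The only caveat, which the paper shares, is that extracting $u,v$ with $fu=z$, $gv=z$ from the limit tacitly assumes the relevant ranges are closed, so this is not a gap relative to the paper's own argument.
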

\begin{proof}
Let $x_0$ be an arbitrary point in $X$. Since $T(X)\subseteq f(X)$ and $S(X)\subseteq g(X)$, we construct the two sequences $\{x_n\}$ and $\{y_n\}$  in $X$ such that
\begin{eqnarray}\label{eq0}
Sx_{2n-2}=gx_{2n-1}=y_{2n-1} \textit{ and } Tx_{2n-1}=fx_{2n}=y_{2n} \quad \text{for all}\quad n\geq 0.
\end{eqnarray}
For $n\geq 0$, we get
\begin{eqnarray*}
d(y_{2n+1},y_{2n+2})&=&d(Sx_{2n},Tx_{2n+1})\\
&\preceq&\lambda_1(fx_{2n},gx_{2n+1})d(fx_{2n},gx_{2n+1})+\lambda_2(fx_{2n},gx_{2n+1})(d(fx_{2n},Sx_{2n})+d(gx_{2n+1},Tx_{2n+1}))\\
&+&\lambda_3(fx_{2n},gx_{2n+1})(d(fx_{2n},Tx_{2n+1})+d(gx_{2n+1},Sx_{2n}))\\
&=&
\lambda_1(y_{2n},y_{2n+1})d(y_{2n},y_{2n+1})+\lambda_2(y_{2n},y_{2n+1})(d(y_{2n},y_{2n+1})+d(y_{2n+1},y_{2n+2}))\\
&+&\lambda_3(y_{2n},y_{2n+1})(d(y_{2n},y_{2n+2})+d(y_{2n+1},y_{2n+1}))\\
&=& (\lambda_1(y_{2n},y_{2n+1})+\lambda_2(y_{2n},y_{2n+1})+\lambda_3(y_{2n},y_{2n+1}))d(y_{2n},y_{2n+1})\\
&+&(\lambda_2(y_{2n},y_{2n+1})+\lambda_3(y_{2n},y_{2n+1}))d(y_{2n+1},y_{2n+2}).
\end{eqnarray*}
This implies that
\begin{equation*}
(1-(\lambda_2(y_{2n},y_{2n+1})+\lambda_3(y_{2n},y_{2n+1})))d(y_{2n+1},y_{2n+2})\preceq (\lambda_1(y_{2n},y_{2n+1})+\lambda_2(y_{2n},y_{2n+1})+\lambda_3(y_{2n},y_{2n+1}))d(y_{2n},y_{2n+1})
\end{equation*}
Further, we have
\begin{eqnarray*}
|d(y_{2n+1},y_{2n+2})|&\preceq& \frac{\lambda_1(y_{2n},y_{2n+1})+\lambda_2(y_{2n},y_{2n+1})+\lambda_3(y_{2n},y_{2n+1})}{1-(\lambda_2(y_{2n},y_{2n+1})+\lambda_3(y_{2n},y_{2n+1}))}|d(y_{2n},y_{2n+1})|\\
&=& \gamma(y_{2n},y_{2n+1})|d(y_{2n},y_{2n+1})|\quad  \text{for all}\quad  n\in \mathbb{N}.
\end{eqnarray*}
On the similar lines, we obtain that
\begin{eqnarray*}
|d(y_{2n},y_{2n+1})|\leq \gamma(y_{2n-1},y_{2n})|d(y_{2n-1},y_{2n})| \quad \text{for all}\quad  n\in \mathbb{N}
\end{eqnarray*}
Consequently, we get
\begin{eqnarray}\label{eq1}
|d(y_{n},y_{n+1})|\leq \gamma(y_{n-1},y_{n})|d(y_{n-1},y_{n})|\quad  \text{for all}\quad  n\in \mathbb{N}
\end{eqnarray}
Thus the sequence $\{|d(y_{n},y_{n+1})|\}_{n\in \mathbb{N}}$ is monotone non increasing and bounded below. Therefore,
$|d(y_{n},y_{n+1})|\rightarrow l$ for some $l\geq0$. Now, we claim that $l=0$. On contrary, assume that $l>0$. Then by taking
limit as $n\rightarrow \infty$ in $(\ref{eq1})$, we get
\begin{eqnarray*}
|l|\leq \lim_{n\rightarrow \infty}\gamma(y_{n-1},y_{n})|l|\leq|l|
\end{eqnarray*}
\begin{eqnarray*}
\Rightarrow \lim_{n\rightarrow \infty}\gamma(y_{n-1},y_{n})\rightarrow 1
\end{eqnarray*}
But $\gamma \in \Gamma$, which implies that $(y_{n-1},y_{n})\rightarrow 0$, that is $d(y_{n-1},y_{n})\rightarrow 0$, which is contradiction to the fact that $l>0$. Thus $l=0$ and hence,
\begin{eqnarray}\label{eq2}
\lim_{n\rightarrow\infty}|d(y_{n-1},y_{n})|=0
\end{eqnarray}

Next, we shall prove that $\{y_n\}$ is a $C$-Cauchy sequence, to prove this it is enough to show that the subsequence $\{y_{2n}\}$ is a $C$-Cauchy sequence.
On contrary, suppose that $\{y_{2n}\}$ is not a $C$-Cauchy sequence. Then there exist $c\in C$ with $0\prec c$ for which, for all $k\in\mathbb{N}$ there exists $2m_k>2n_k\geq k$ such that
\begin{eqnarray}\label{eq3}
d(y_{2n_k},y_{2m_k})\succeq c
\end{eqnarray}
Now, corresponding to $n_k$, we can choose $m_k$ in such a way that it is the smallest integer with
$2m_k>2n_k\geq k$ satisfying (\ref{eq3}). Then
\begin{eqnarray}\label{eq4}
d(y_{2m_k},y_{2m_k-2})\prec c
\end{eqnarray}
From equation (\ref{eq3}), (\ref{eq4}) and triangular inequality, we have
\begin{eqnarray*}
c \preceq d(y_{2n_k},y_{2m_k}) &\preceq& d(y_{2n_k},y_{2m_k-2})+d(y_{2m_k-2},y_{2m_k-1})+d(y_{2m_k-1},y_{2m_k}) \\
  &\prec & c+d(y_{2m_k-2},y_{2m_k-1})+d(y_{2m_k-1},y_{2m_k})
\end{eqnarray*}
which implies that
\begin{eqnarray*}
|c|\leq |d(y_{2n_k},y_{2m_k})|\leq|c|+|d(y_{2m_k-2},y_{2m_k-1})|+|d(y_{2m_k-1},y_{2m_k})|
\end{eqnarray*}
Taking limit as $k\rightarrow \infty$ and using (\ref{eq2}), we have
\begin{eqnarray*}
  |c|\leq \lim_{k\rightarrow \infty}|d(y_{2n_k},y_{2m_k})|\leq |c|
\end{eqnarray*}
which implies that
\begin{eqnarray}\label{eq5}
\lim_{k\rightarrow \infty}|d(y_{2n_k},y_{2m_k})|=|c|
\end{eqnarray}
Now, using triangular inequality, we have
\begin{eqnarray*}
|d(y_{2n_k},y_{2m_k})|&\leq & |d(y_{2n_k},y_{2m_k+1})|+|d(y_{2m_k+1},y_{2m_k})|\\
&\leq &|d(y_{2n_k},y_{2m_k})|+|d(y_{2m_k},y_{2m_k+1})|+|d(y_{2m_k+1},y_{2m_k})|
\end{eqnarray*}
Taking limit as $k\rightarrow \infty$ and using (\ref{eq2}) and (\ref{eq5}), we get
\begin{eqnarray*}
|c|\leq \lim_{k\rightarrow\infty}|d(y_{2n_k},y_{2m_k+1})|\leq |c|
\end{eqnarray*}
\begin{eqnarray}\label{eq6}
\lim_{k\rightarrow\infty}|d(y_{2n_k},y_{2m_k+1})|=|c|
\end{eqnarray}
Next, we have
\begin{eqnarray*}
 d(y_{2n_k},y_{2m_k}) &\preceq & d(y_{2n_k},y_{2n_k+1})+d(y_{2n_k+1},y_{2m_k+2})+d(y_{2m_k+2},y_{2m_k+1})\\
  &=& d(y_{2n_k},y_{2n_k+1})+d(Sx_{2n_k},Tx_{2m_k+1})+d(y_{2m_k+2},y_{2m_k+1})\\
  &\preceq & d(y_{2n_k},y_{2n_k+1})+\lambda_1(fx_{2n_k},gx_{2m_k+1})d(fx_{2n_k},gx_{2m_k+1})+\lambda_2(fx_{2n_k},gx_{2m_k+1})(d(fx_{2n_k},Sx_{2n_k})\\
  &+& d(gx_{2m_k+1},Tx_{2m_k+1}))+\lambda_3(fx_{2n_k},gx_{2m_k+1})(d(fx_{2n_k},Tx_{2m_k+1})+d(gx_{2m_k+1},Sx_{2n_k}))\\
  &+& d(y_{2m_k+2},y_{2m_k+1})\\
  &= & d(y_{2n_k},y_{2n_k+1})+\lambda_1(y_{2n_k},y_{2m_k+1})d(y_{2n_k},y_{2m_k+1})+\lambda_2(y_{2n_k},y_{2m_k+1})(d(y_{2n_k},y_{2n_k+1})\\
  &+& d(y_{2m_k+1},y_{2m_k+2}))+\lambda_3(y_{2n_k},y_{2m_k+1})(d(y_{2n_k},y_{2m_k+2})+d(y_{2m_k+1},y_{2n_k+1}))\\
  &+& d(y_{2m_k+2},y_{2m_k+1})
\end{eqnarray*}
which implies,
\begin{eqnarray*}
 |d(y_{2n_k},y_{2m_k})|&\leq&|d(y_{2n_k},y_{2n_k+1})|+\lambda_1(y_{2n_k},y_{2m_k+1})|d(y_{2n_k},y_{2m_k+1})|+\lambda_2(y_{2n_k},y_{2m_k+1})|(d(y_{2n_k},y_{2n_k+1})\\
  &+& d(y_{2m_k+1},y_{2m_k+2}))|+\lambda_3(y_{2n_k},y_{2m_k+1})|(d(y_{2n_k},y_{2m_k+2})+d(y_{2m_k+1},y_{2n_k+1}))|+|d(y_{2m_k+2},y_{2m_k+1})|\\
  &\leq& |d(y_{2n_k},y_{2n_k+1})|+(\lambda_1(y_{2n_k},y_{2m_k+1})+\lambda_2(y_{2n_k},y_{2m_k+1})+\lambda_3(y_{2n_k},y_{2m_k+1}))|d(y_{2n_k},y_{2m_k+1})|\\
  &+& |d(y_{2n_k},y_{2n_k+1})+d(y_{2m_k+1},y_{2m_k+2})|+|d(y_{2n_k},y_{2m_k+2})+d(y_{2m_k+1},y_{2n_k+1})+d(y_{2m_k+2},y_{2m_k+1})|\\
&\leq&|d(y_{2n_k},y_{2n_k+1})|+\frac{(\lambda_1(y_{2n_k},y_{2m_k+1})+\lambda_2(y_{2n_k},y_{2m_k+1})+\lambda_3(y_{2n_k},y_{2m_k+1}))}{1-(\lambda_2(y_{2n_k},y_{2m_k+1})+\lambda_3(y_{2n_k},y_{2m_k+1}))}
  |d(y_{2n_k},y_{2m_k+1})|\\
  &+& |d(y_{2n_k},y_{2n_k+1})+d(y_{2m_k+1},y_{2m_k+2})|+|d(y_{2n_k},y_{2n_k+1})|\\
 &=&|d(y_{2n_k},y_{2n_k+1})|+\gamma(y_{2n_k},y_{2m_k+1})|d(y_{2n_k},y_{2m_k+1})|+|d(y_{2n_k},y_{2n_k+1})+d(y_{2m_k+1},y_{2m_k+2})|\\
 &+&|d(y_{2n_k},y_{2n_k+1})|\\
 &\leq&|d(y_{2n_k},y_{2n_k+1})|+|d(y_{2n_k},y_{2m_k+1})|+|d(y_{2n_k},y_{2n_k+1})+d(y_{2m_k+1},y_{2m_k+2})|+|d(y_{2n_k},y_{2n_k+1})|\\
\end{eqnarray*}
Taking limit as $k\rightarrow\infty$ and using equation (\ref{eq2}) and (\ref{eq6}), we get
\begin{eqnarray*}
|c|\leq \lim_{n\rightarrow\infty}\gamma(y_{2n_k},y_{2m_k+1})|c|\leq|c|
\end{eqnarray*}
therefore,
\begin{eqnarray*}
\lim_{n\rightarrow\infty}\gamma(y_{2n_k},y_{2m_k+1})=1
\end{eqnarray*}
Since $\gamma\in \Gamma$,  we have $d(y_{2n_k},y_{2m_k+1})\rightarrow 0$ as $k\rightarrow\infty$, which is contradiction. Thus $\{y_{2n}\}$ is a $C$-Cauchy sequence and hence $\{y_n\}$ is a $C$-Cauchy sequence. As $X$ is C-complete, therefore there exists $t\in X$ such that $y_n\rightarrow t$ as $n\rightarrow\infty$, Therefore from equation (\ref{eq0}), we get
\begin{eqnarray}\label{eq7}
\lim_{n\rightarrow\infty}Sx_{2n}=\lim_{n\rightarrow\infty}Tx_{2n+1}=\lim_{n\rightarrow\infty}fx_{2n}=\lim_{n\rightarrow\infty}gx_{2n+1}=t
\end{eqnarray}
Next, since $S(X)\subseteq g(X)$, there exist $u\in X$ such that $g(u)=t$. Thus equation (\ref{eq7}) becomes
\begin{eqnarray}\label{eq8}
\lim_{n\rightarrow\infty}Sx_{2n}=\lim_{n\rightarrow\infty}Tx_{2n+1}=\lim_{n\rightarrow\infty}fx_{2n}=\lim_{n\rightarrow\infty}gx_{2n+1}=t=g(u)
\end{eqnarray}
We shall show that $Tu=gu$, for this consider
\begin{eqnarray*}
d(t,Tu)&\preceq& d(t,Sx_{2n})+d(Sx_{2n},Tu)\\
&\preceq& d(t,Sx_{2n})+ \lambda_1(fx_{2n},gu)d(fx_{2n},gu)+\lambda_2(fx_{2n},gu)(d(fx_{2n},Sx_{2n})+d(gu,Tu))+\lambda_3(fx_{2n},gu)(d(fx_{2n},Tu)\\
&+&d(gu,Sx_{2n}))
\end{eqnarray*}
Taking limit as $k\rightarrow\infty$ and using equation (\ref{eq8}), we get
\begin{eqnarray*}
d(t,Tu)&\preceq& d(t,t)+ \lambda_1(gu,gu)d(gu,gu)+\lambda_2(gu,gu)(d(gu,gu)+d(gu,Tu))+\lambda_3(gu,gu)(d(gu,Tu)+d(gu,gu))\\
&=&\lambda_2(gu,gu)d(gu,Tu)+\lambda_3(gu,gu)(d(gu,Tu)
\end{eqnarray*}
this implies,
\begin{eqnarray*}
\bigg(1-(\lambda_2(gu,gu)+\lambda_3(gu,gu))\bigg)d(t,Tu)\preceq 0
\end{eqnarray*}
therefore,
\begin{eqnarray*}
\bigg(1-(\lambda_2(gu,gu)+\lambda_3(gu,gu))\bigg)|d(t,Tu)|\leq 0,
\end{eqnarray*}
which implies
\begin{eqnarray}\label{eq9}
d(t,Tu)=0, \quad\text{therefore}\quad Tu=t \quad\text{and hence}\quad Tu=gu=t
\end{eqnarray}
Also it is given that $T(X)\subseteq f(X)$, therefore there exist $v\in X$ such that $fv=t$. Thus from equation (\ref{eq7}), we get
\begin{eqnarray}\label{eq10}
\lim_{n\rightarrow\infty}Sx_{2n}=\lim_{n\rightarrow\infty}Tx_{2n+1}=\lim_{n\rightarrow\infty}fx_{2n}=\lim_{n\rightarrow\infty}gx_{2n+1}=t=fv
\end{eqnarray}
Now, we shall show that $Sv=fv$. For this consider
\begin{eqnarray*}
d(Sv,t)&\preceq& d(Sv,Tx_{2n+1})+d(Tx_{2n+1},t)\\
&\preceq&\lambda_1(fv,gx_{2n+1})d(fv,gx_{2n+1})+\lambda_2(fv,gx_{2n+1})(d(fv,Sv)+d(gx_{2n+1},Tx_{2n+1}))+\lambda_3(fv,gx_{2n+1})(d(fv,Tx_{2n+1})\\
&+&d(gx_{2n+1},Sv))+d(Tx_{2n+1},t)
\end{eqnarray*}
Taking limit as $k\rightarrow\infty$ and using equation (\ref{eq10}), we get
\begin{eqnarray*}
d(Sv,t)&\preceq&\lambda_1(fv,fv)d(fv,t)+\lambda_2(fv,fv)(d(fv,Sv)+d(t,t)+\lambda_3(fv,fv)(d(fv,t)+d(t,Sv))+d(t,t)\\
&\preceq&\lambda_1(fv,fv)d(t,t)+\lambda_2(fv,fv)(d(t,Sv)+d(t,t)+\lambda_3(fv,fv)(d(t,t)+d(t,Sv))+d(t,t)\\
&=&(\lambda_2(fv,fv)+\lambda_3(fv,fv))d(t,Sv)
\end{eqnarray*}
this implies,
\begin{eqnarray*}
\bigg(1-(\lambda_2(fv,fv)+\lambda_3(fv,fv))\bigg)d(t,Sv)\preceq 0
\end{eqnarray*}
therefore,
\begin{eqnarray*}
\bigg(1-(\lambda_2(fv,fv)+\lambda_3(fv,fv))\bigg)|d(t,Sv)|\leq 0
\end{eqnarray*}
which implies $d(t,Sv)=0$, therefore $t=Sv$ and hence from equation (\ref{eq10}), we get
\begin{eqnarray}\label{eq11}
Sv=fv=t
\end{eqnarray}
Therefore from equations (\ref{eq9}) and (\ref{eq11}), we get
\begin{eqnarray}\label{eq12}
Tu=gu=Sv=fv=t
\end{eqnarray}
Since the pairs $(f,S)$ and $(g,T)$ are weakly compatible. Therefore from equation (\ref{eq12}), we have
\begin{eqnarray}\label{eq13}
fv=Sv \Rightarrow Sfv=fSv \Rightarrow St=ft
\end{eqnarray}
and
\begin{eqnarray}\label{eq14}
gu=Tu \Rightarrow Tgu=gTu \Rightarrow Tt=gt
\end{eqnarray}
which implies that $t$ is a coincident point of each pair $(f,S)$ and $(g,T)$ in $X$.\\
Next, we shall show that $t$ is common fixed of $S, T, f$ and $g$. For this assume that $St=t$, if not then using the condition (3) of Theorem 3.1
with $x=t$ and $y=u$, we have
\begin{eqnarray*}
d(St,Tu)\preceq \lambda_1(ft,gu)d(ft,gu)+\lambda_2(ft,gu)(d(ft,St)+d(gu,Tu))+\lambda_3(ft,gu)(d(ft,Tu)+d(gu,St))
\end{eqnarray*}
Using equations (\ref{eq12}) and (\ref{eq13}), we have
\begin{eqnarray*}
d(St,t)&\preceq& \lambda_1(St,t)d(St,t)+\lambda_2(St,t)(d(St,St)+d(t,t))+\lambda_3(St,t)(d(St,t)+d(t,St))\\
&\preceq&\lambda_1(St,t)d(St,t)+\lambda_3(St,t)2d(St,t)\\
&=&(\lambda_1(St,t)+2\lambda_3(St,t))d(St,t)
\end{eqnarray*}
which implies,
\begin{eqnarray*}
(1-(\lambda_1(St,t)+2\lambda_3(St,t)))d(St,t)\preceq 0
\end{eqnarray*}
therefore,
\begin{eqnarray*}
(1-(\lambda_1(St,t)+2\lambda_3(St,t)))|d(St,t)|\leq 0
\end{eqnarray*}
Hence $d(St,t)=0$ i.e $St=t$, therefore from equation (\ref{eq13}), we get
\begin{eqnarray}\label{eq15}
ft=St=t
\end{eqnarray}
Similarly assume that $Tt=t$, if not then using the condition (3) of Theorem 3.1
with $x=v$ and $y=t$, we have $Tt=t$, therefore from equation (\ref{eq14}), we get
\begin{eqnarray}\label{eq16}
gt=Tt=t
\end{eqnarray}
From equations (\ref{eq15}) and (\ref{eq16}), we have
\begin{eqnarray*}
ft=gt=St=Tt=t
\end{eqnarray*}
Thus $t$ is a common fixed point of $S,T,f$ and $g$.\\
To check uniqueness, assume that $t^*\neq t$ be another fixed point of $S,T,f$ and $g$. Let $x=t$ and $y=t^*$ in condition (3) of Theorem 3.1, we get
\begin{eqnarray*}
d(t,t^*)=d(St,Tt^*)&\preceq&\lambda_1(ft,gt^*)d(ft,gt^*)+\lambda_2(ft,gt^*)(d(ft,St)+d(gt^*,Tt^*))+\lambda_3(ft,gt^*)(d(ft,Tt^*)+d(gt^*,St))\\
&\preceq&\lambda_1(t,t^*)d(t,t^*)+\lambda_2(t,t^*)(d(t,t)+d(t^*,t^*))+\lambda_3(t,t^*)(d(t,t^*)+d(t^*,t))\\
&=&(\lambda_1(t,t^*)+2\lambda_3(t,t^*))d(t,t^*),
\end{eqnarray*}
This implies
\begin{eqnarray*}
\bigg(1-(\lambda_1(t,t^*)+2\lambda_3(t,t^*))\bigg)d(t,t^*)\preceq 0
\end{eqnarray*}
therefore,
\begin{eqnarray*}
\bigg(1-(\lambda_1(t,t^*)+2\lambda_3(t,t^*))\bigg)|d(t,t^*)|\leq 0
\end{eqnarray*}
which implies $d(t,t^*)=0$ as $\lambda_1(x,y)+2\lambda_2(x,y)+2\lambda_3(x,y)<1$, hence $t=t^*$,
thus $t$ is a unique common fixed point of $S,T,f$ and $g$.
\end{proof}

Using the same arguments as in Theorem \ref{thm31}, we have the following result.

\begin{theorem}
Let $(X,d)$ be a C-complete complex valued metric space and $f,g,S,T:X\rightarrow X$ be four self mappings. If there exists three mappings $\lambda_1$, $\lambda_2$, $\lambda_3 : \mathbb{C}_+\times\mathbb{C}_+  \rightarrow [0,1)$ such that for all $x,y\in X$ the following axioms holds:\\
\begin{enumerate}

  \item  $\lambda_1(x,y)+\lambda_2(x,y)+\lambda_3(x,y)<1$;\\
  \item the mapping $\gamma : \mathbb{C}_+\times\mathbb{C}_+\rightarrow [0,1)$ defined by $\gamma(x,y)=\frac{\lambda_1(x,y)}{1-\lambda_2(x,y)-\lambda_3(x,y)}$ belongs to $\Gamma$;\\
  \item $d(Sx,Ty)\preceq \lambda_1(fx,gy)d(fx,gy)+\lambda_2(fx,gy)\frac{d(Sx,fx)d(Ty,gy)}{1+d(fx,gy)}+\lambda_3(fx,gy)\frac{d(Sx,gy)d(Ty,gy)}{1+d(fx,gy)}$;\\
  \item $T(X)\subseteq f(X)$ and $S(X)\subseteq g(X)$ and the pairs $(f,S)$ and $(g,T)$ are weakly compatible\\
\end{enumerate}

Then $f$, $g$, $S$ and $T$ have a unique common fixed point in $X$.

\end{theorem}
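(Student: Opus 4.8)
The plan is to follow verbatim the architecture of the proof of Theorem \ref{thm31}, changing only the estimates that use the contractive hypothesis (3), since that is the sole structural difference. First I would fix $x_0\in X$ and, using $T(X)\subseteq f(X)$ and $S(X)\subseteq g(X)$, build the same sequences as in (\ref{eq0}): $Sx_{2n}=gx_{2n+1}=y_{2n+1}$ and $Tx_{2n+1}=fx_{2n+2}=y_{2n+2}$. The one new ingredient is an elementary bound on the rational terms: for every $z\in\mathbb{C}_+$, writing $z=a+bi$ with $a,b\geq 0$, one has $|1+z|\geq 1$ and $|1+z|\geq|z|$, so in particular $\frac{|d(fx,gy)|}{|1+d(fx,gy)|}\leq 1$. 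This is what linearizes the two product terms.

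For the basic gap estimate I would apply (3) with $x=x_{2n}$, $y=x_{2n+1}$ to $d(y_{2n+1},y_{2n+2})=d(Sx_{2n},Tx_{2n+1})$. The key simplification is that $d(Sx_{2n},gx_{2n+1})=d(y_{2n+1},y_{2n+1})=0$, which annihilates the $\lambda_3$ term, while the $\lambda_2$ term becomes $\lambda_2\frac{d(y_{2n},y_{2n+1})\,d(y_{2n+1},y_{2n+2})}{1+d(y_{2n},y_{2n+1})}$. Passing to moduli and using the bound above gives $(1-\lambda_2)|d(y_{2n+1},y_{2n+2})|\leq\lambda_1|d(y_{2n},y_{2n+1})|$; since $1-\lambda_2\geq 1-\lambda_2-\lambda_3$ this is exactly $|d(y_{2n+1},y_{2n+2})|\leq\gamma(y_{2n},y_{2n+1})|d(y_{2n},y_{2n+1})|$ with the $\gamma$ of hypothesis (2). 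From here the passage to (\ref{eq1})--(\ref{eq2}), forcing the gaps to $0$ via $\gamma\in\Gamma$, is copied unchanged.

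The $C$-Cauchy step transcribes directly. Assuming $\{y_{2n}\}$ is not $C$-Cauchy, I would run the same $n_k,m_k$ construction and triangle-inequality chain, but now when (3) is applied to $d(Sx_{2n_k},Tx_{2m_k+1})$ both rational terms carry the factor $d(Tx_{2m_k+1},gx_{2m_k+1})=d(y_{2m_k+2},y_{2m_k+1})$, whose modulus tends to $0$ by (\ref{eq2}); hence both products vanish in the limit and only $\lambda_1|d(y_{2n_k},y_{2m_k+1})|\leq\gamma(y_{2n_k},y_{2m_k+1})|d(y_{2n_k},y_{2m_k+1})|$ survives (using $\lambda_1\leq\gamma$, valid because $1-\lambda_2-\lambda_3\leq 1$). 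This reproduces $|c|\leq\lim\gamma|c|\leq|c|$, so $\gamma\to 1$ and $\gamma\in\Gamma$ gives the contradiction. $C$-completeness then yields $y_n\to t$, and the coincidence, common-fixed-point and uniqueness steps (\ref{eq7})--(\ref{eq16}) go through as before; in fact they are easier here, because at each stage the surviving rational terms carry a numerator factor that is either $0$ at a coincidence point (e.g.\ $d(Tu,gu)=0$, $d(Tt^*,gt^*)=0$) or tends to $0$ along the iterates (e.g.\ $d(Sx_{2n},fx_{2n})\to 0$), so they drop out and leave a pure $\lambda_1$-contraction that forces the relevant distance to vanish.

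The step I expect to be the real obstacle is the companion gap estimate for $d(y_{2n},y_{2n+1})$ (the ``on similar lines'' part). Writing $d(y_{2n},y_{2n+1})=d(Sx_{2n},Tx_{2n-1})$ and applying (3), the cross factor is now $d(Sx_{2n},gx_{2n-1})=d(y_{2n+1},y_{2n-1})\neq 0$, so the $\lambda_3$ term no longer disappears. I would dominate it using $d(y_{2n+1},y_{2n-1})\preceq d(y_{2n+1},y_{2n})+d(y_{2n},y_{2n-1})$, but the piece $d(y_{2n+1},y_{2n})$ feeds a multiple of the new gap back onto the left, turning the contraction constant into $\frac{\lambda_1+\lambda_3}{1-\lambda_2-\lambda_3}$. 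Keeping this strictly below $1$ requires $\lambda_1+\lambda_2+2\lambda_3<1$, which is slightly stronger than hypothesis (1); pinning down this cross term (or strengthening (1) to match Theorem \ref{thm31}) is the one genuinely delicate point, and everything else is routine bookkeeping.
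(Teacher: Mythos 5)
Your plan coincides with what the paper actually offers: the paper gives no independent proof of this theorem, stating only that it follows ``using the same arguments as in Theorem \ref{thm31}''. Your transcription of those arguments is accurate where it works: the even-indexed gap estimate (where $d(Sx_{2n},gx_{2n+1})=d(y_{2n+1},y_{2n+1})=0$ kills the $\lambda_3$ term and $|1+z|\geq\max\{1,|z|\}$ for $z\in\mathbb{C}_+$ linearizes the $\lambda_2$ term, giving the constant $\tfrac{\lambda_1}{1-\lambda_2}\leq\gamma$), the $C$-Cauchy contradiction (where the common factor $d(Tx_{2m_k+1},gx_{2m_k+1})=d(y_{2m_k+2},y_{2m_k+1})\to 0$ annihilates both rational terms, the other factors being bounded since $|1+d(fx,gy)|\geq 1$), and the coincidence, fixed-point and uniqueness steps, in which the surviving numerators vanish. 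All of this is exactly the intended adaptation of (\ref{eq0})--(\ref{eq16}).

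The obstacle you isolate in your last paragraph is genuine, and nothing in the paper resolves it. For $d(y_{2n},y_{2n+1})=d(Sx_{2n},Tx_{2n-1})$ the factor $d(Sx_{2n},gx_{2n-1})=d(y_{2n+1},y_{2n-1})$ does not vanish; bounding $|d(Tx_{2n-1},gx_{2n-1})|/|1+d(fx_{2n},gx_{2n-1})|$ by $1$ and applying the triangle inequality yields $(1-\lambda_2-\lambda_3)\,|d(y_{2n},y_{2n+1})|\leq(\lambda_1+\lambda_3)\,|d(y_{2n-1},y_{2n})|$, exactly as you compute. The ratio $(\lambda_1+\lambda_3)/(1-\lambda_2-\lambda_3)$ is not the $\gamma$ of hypothesis (2), and under hypothesis (1) alone it need not be below $1$: with $\lambda_1=\lambda_2=0$ and $\lambda_3=3/5$ one has $\lambda_1+\lambda_2+\lambda_3<1$ but the ratio equals $3/2$. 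So the sequence $\{|d(y_n,y_{n+1})|\}$ need not even be monotone, and both (\ref{eq1}) and (\ref{eq2}) --- which the Cauchy step uses for gaps of both parities --- are in jeopardy. The ``same arguments'' therefore do not establish the theorem as stated; one must either strengthen (1) to $\lambda_1+\lambda_2+2\lambda_3<1$ and impose the $\Gamma$-condition on $(\lambda_1+\lambda_3)/(1-\lambda_2-\lambda_3)$ rather than on the stated $\gamma$ (after which every step you outline closes), or produce a sharper bound on the cross term $d(Sx,gy)$ that the paper does not hint at. Your diagnosis, including the corrected hypothesis, is the right one.
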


\begin{corollary}
Let $(X,d)$ be a C-complete complex valued metric space and $f,g,S,T:X\rightarrow X$ be four self mappings satisfying the following conditions;\\
\begin{enumerate}
  \item $d(Sx,Ty)\preceq \lambda_1d(fx,gy)+\lambda_2(d(fx,Sx)+d(gy,Ty))+\lambda_3(d(fx,Ty)+d(gy,Sx))$ for all
   $x,y\in X$, where $\lambda_1, \lambda_2, \lambda_3\in R^+$ with $\lambda_1+2\lambda_2+2\lambda_3<1$; \\
  \item $T(X)\subseteq f(X)$ and $S(X)\subseteq g(X)$ and the pairs $(f,S)$ and $(g,T)$ are weakly compatible\\
\end{enumerate}
Then $f$, $g$, $S$ and $T$ have a unique common fixed point in $X$.
\end{corollary}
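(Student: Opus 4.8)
The plan is to obtain the Corollary as the constant-coefficient specialization of Theorem~\ref{thm31}. First I would set $\lambda_1(x,y)\equiv\lambda_1$, $\lambda_2(x,y)\equiv\lambda_2$ and $\lambda_3(x,y)\equiv\lambda_3$ for all $x,y\in X$, where the right-hand sides are the fixed numbers $\lambda_1,\lambda_2,\lambda_3\in R^+$ given in the hypothesis. Under this choice the structural assumptions of Theorem~\ref{thm31} translate immediately: axiom (1) becomes $\lambda_1+2\lambda_2+2\lambda_3<1$, which is assumption (1) of the Corollary; axiom (3) becomes the contractive inequality in assumption (1) of the Corollary; and axiom (4) is precisely assumption (2). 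Thus the only thing left to verify is axiom (2), that the induced mapping $\gamma$ lies in $\Gamma$.

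Next I would check that the associated constant
\[
\gamma(x,y)=\frac{\lambda_1+\lambda_2+\lambda_3}{1-\lambda_2-\lambda_3}=:k
\]
is well defined and lies in $[0,1)$. Since $\lambda_1\ge 0$, the hypothesis $\lambda_1+2\lambda_2+2\lambda_3<1$ forces $2(\lambda_2+\lambda_3)<1$, so $1-\lambda_2-\lambda_3>\tfrac12>0$ and the denominator is positive. The inequality $k<1$ is then equivalent to $\lambda_1+\lambda_2+\lambda_3<1-\lambda_2-\lambda_3$, i.e. to $\lambda_1+2\lambda_2+2\lambda_3<1$, which holds by hypothesis; hence $k\in[0,1)$.

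Finally I would confirm that the constant map $\gamma\equiv k$ belongs to $\Gamma$. Membership requires that for every sequence $(x_n,y_n)$ in $\mathbb{C}_+\times\mathbb{C}_+$ the implication $\gamma(x_n,y_n)\to 1\Rightarrow(x_n,y_n)\to 0$ hold. Because $\gamma(x_n,y_n)=k<1$ for every $n$, this sequence of values cannot converge to $1$, so the antecedent is never satisfied and the implication is vacuously true. Therefore $\gamma\in\Gamma$, all four axioms of Theorem~\ref{thm31} are in force, and applying that theorem produces a unique common fixed point of $f,g,S,T$ in $X$, which is exactly the conclusion of the Corollary. I do not expect any genuine obstacle here, since the argument is a direct reduction; the single point worth stating with care is the vacuous verification that any constant coefficient strictly below $1$ automatically lies in the class $\Gamma$.
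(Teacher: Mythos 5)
Your reduction is correct and is exactly how the paper intends this corollary to be obtained: take $\lambda_1,\lambda_2,\lambda_3$ to be constant functions, note that the induced $\gamma$ is the constant $k=\frac{\lambda_1+\lambda_2+\lambda_3}{1-\lambda_2-\lambda_3}\in[0,1)$ and hence lies in $\Gamma$ vacuously, and invoke Theorem~\ref{thm31}. The paper offers no separate proof for the corollary, and your careful verification of the denominator's positivity and of the vacuous $\Gamma$-membership fills in precisely the details it leaves implicit.
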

\begin{corollary}
Let $(X,d)$ be a C-complete complex valued metric space and $S,T:X\rightarrow X$ be two self mappings. If there exists three mappings $\lambda_1$, $\lambda_2$, $\lambda_3 : \mathbb{C}_+\times\mathbb{C}_+  \rightarrow [0,1)$ for all $x,y\in X$ satisfying the following conditions;\\
\begin{enumerate}
  \item  $\lambda_1(x,y)+2\lambda_2(x,y)+2\lambda_3(x,y)<1$ for all $(x,y)\in \mathbb{C}_+\times\mathbb{C}_+$;\\
  \item the mapping $\gamma : \mathbb{C}_+\times\mathbb{C}_+\rightarrow [0,1)$ defined by $\gamma(x,y)=\frac{\lambda_1(x,y)+\lambda_2(x,y)+\lambda_3(x,y)}{1-\lambda_2(x,y)-\lambda_3(x,y)}$ belongs to $\Gamma$;\\
  \item $d(Sx,Ty)\preceq \lambda_1(x,y)d(x,y)+\lambda_2(x,y)(d(x,Sx)+d(y,Ty))+\lambda_3(x,y)(d(x,Ty)+d(y,Sx))$\\
\end{enumerate}
Then $S$ and $T$ have a unique common fixed point in $X$.
\end{corollary}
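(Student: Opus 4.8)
The plan is to obtain this corollary as an immediate specialization of Theorem~\ref{thm31}, taking $f=g=I_X$, where $I_X$ denotes the identity self-mapping on $X$. The entire argument consists in checking that, under this substitution, each of the four hypotheses of Theorem~\ref{thm31} is met, and then reading off what its conclusion asserts in this case.

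First I would verify the structural hypotheses. Conditions (1) and (2) of the corollary are identical to conditions (1) and (2) of Theorem~\ref{thm31}, so nothing is required there. For condition (3), substituting $fx=x$ and $gy=y$ into the contractive inequality of Theorem~\ref{thm31} turns it verbatim into condition (3) of the corollary. For condition (4), the range inclusions $T(X)\subseteq I_X(X)=X$ and $S(X)\subseteq I_X(X)=X$ hold trivially. It then remains to observe that the pairs $(I_X,S)$ and $(I_X,T)$ are weakly compatible: if $x$ is a coincidence point of $I_X$ and $S$, i.e.\ $Sx=I_Xx=x$, then $I_X(Sx)=Sx=S(I_Xx)$, so the two maps commute at $x$; the same reasoning applies to $(I_X,T)$. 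Hence all hypotheses of Theorem~\ref{thm31} are satisfied.

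Applying Theorem~\ref{thm31} then yields a unique point $t\in X$ with $I_Xt=I_Xt=St=Tt=t$. Since $I_Xt=t$ holds automatically, this asserts precisely that $St=Tt=t$, i.e.\ $t$ is a common fixed point of $S$ and $T$, and the uniqueness granted by the theorem transfers without change. There is no genuine obstacle here; the only point that warrants explicit mention is the (elementary) verification that the identity map forms a weakly compatible pair with an arbitrary self-mapping, since this is what licenses the reduction from four mappings to two.
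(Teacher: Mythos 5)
Your proposal is correct and matches the paper's (implicit) intent: the corollary is stated as a direct specialization of Theorem~\ref{thm31} with $f=g=I_X$, and your verification of the range inclusions and of the weak compatibility of $(I_X,S)$ and $(I_X,T)$ is exactly the routine check required. Nothing further is needed.
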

\begin{corollary}
Let $(X,d)$ be a C-complete complex valued metric space and $T:X\rightarrow X$ be a self mappings. If there exists three mappings $\lambda_1$, $\lambda_2$, $\lambda_3 : \mathbb{C}_+\times\mathbb{C}_+  \rightarrow [0,1)$ for all $x,y\in X$ satisfying the following conditions;\\
\begin{enumerate}
  \item  $\lambda_1(x,y)+2\lambda_2(x,y)+2\lambda_3(x,y)<1$ for all $(x,y)\in \mathbb{C}_+\times\mathbb{C}_+$;\\
  \item the mapping $\gamma : \mathbb{C}_+\times\mathbb{C}_+\rightarrow [0,1)$ defined by $\gamma(x,y)=\frac{\lambda_1(x,y)+\lambda_2(x,y)+\lambda_3(x,y)}{1-\lambda_2(x,y)-\lambda_3(x,y)}$ belongs to $\Gamma$;\\
  \item $d(Tx,Ty)\preceq \lambda_1(x,y)d(x,y)+\lambda_2(x,y)(d(x,Tx)+d(y,Ty))+\lambda_3(x,y)(d(x,Ty)+d(y,Tx))$\\
\end{enumerate}
Then $T$ have a unique fixed point in $X$.
\end{corollary}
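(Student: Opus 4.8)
The plan is to derive this corollary directly from Theorem \ref{thm31}, reusing its entire machinery rather than repeating the Picard-type iterative construction. I would invoke Theorem \ref{thm31} with the specialization $f=g=I_X$, where $I_X$ denotes the identity map on $X$, together with $S=T$.

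First I would verify that each of the four hypotheses of Theorem \ref{thm31} holds under this substitution. Conditions (1) and (2) of the corollary are identical to conditions (1) and (2) of Theorem \ref{thm31}, since the functions $\lambda_1,\lambda_2,\lambda_3$ and the induced $\gamma$ are unchanged. For condition (4), the range inclusions $T(X)\subseteq f(X)=X$ and $S(X)\subseteq g(X)=X$ hold trivially, and both pairs $(f,S)=(I_X,T)$ and $(g,T)=(I_X,T)$ are weakly compatible: a coincidence point $x$ satisfies $I_X x=Tx$, i.e.\ $x=Tx$, and then $I_X(Tx)=Tx=T(I_X x)$, so the two maps commute there. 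Finally, substituting $fx=x$, $gy=y$, and $S=T$ into the contractive inequality (3) of Theorem \ref{thm31} reproduces verbatim the inequality in condition (3) of this corollary.

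With all hypotheses confirmed, Theorem \ref{thm31} yields a unique common fixed point $t\in X$ of $f,g,S,T$, meaning $I_X t=Tt=t$; hence $Tt=t$ and $t$ is a fixed point of $T$, and its uniqueness as a fixed point of $T$ is inherited directly from the uniqueness of the common fixed point in Theorem \ref{thm31}. Because the whole argument is a pure specialization of an already-established result, there is no substantive obstacle; the only step deserving a moment's attention is the weak compatibility of the pair $(I_X,T)$, which is automatic since the identity map commutes with every self-map and in particular at any coincidence point.
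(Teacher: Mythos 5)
Your proposal is correct and matches the paper's intent: the corollary is stated as an immediate specialization of Theorem \ref{thm31} with $f=g=I_X$ and $S=T$, and you verify all four hypotheses (including the automatic weak compatibility of $(I_X,T)$) exactly as needed. The uniqueness transfer is also sound, since any fixed point of $T$ is trivially a common fixed point of $I_X,I_X,T,T$.
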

\begin{corollary}
Let $(X,d)$ be a C-complete complex valued metric space and $f,g,S,T:X\rightarrow X$ be four self mappings satisfying the following conditions;
\begin{enumerate}
  \item $d(Sx,Ty)\preceq \lambda_1d(fx,gy)+\lambda_2\frac{d(Sx,fx)d(Ty,gy)}{1+d(fx,gy)}+\lambda_3\frac{d(Sx,gy)d(Ty,gy)}{1+d(fx,gy)}$;
    for all $x,y\in X$, where\\
   $\lambda_1, \lambda_2, \lambda_3\in R^+$ with $\lambda_1+\lambda_2+\lambda_3<1$;\\
  \item $T(X)\subseteq f(X)$ and $S(X)\subseteq g(X)$ and the pairs $(f,S)$ and $(g,T)$ are weakly compatible
\end{enumerate}
Then $f$, $g$, $S$ and $T$ have a unique common fixed point in $X$.
\begin{corollary}
Let $(X,d)$ be a C-complete complex valued metric space and $S,T:X\rightarrow X$ be two self mappings. If there exists three mappings $\lambda_1$, $\lambda_2$, $\lambda_3 : \mathbb{C}_+\times\mathbb{C}_+  \rightarrow [0,1)$ such that for all $x,y\in X$ the following axioms holds:\\
\begin{enumerate}

  \item  $\lambda_1(x,y)+\lambda_2(x,y)+\lambda_3(x,y)<1$;\\
  \item the mapping $\gamma : \mathbb{C}_+\times\mathbb{C}_+\rightarrow [0,1)$ defined by $\gamma(x,y)=\frac{\lambda_1(x,y)}{1-\lambda_2(x,y)-\lambda_3(x,y)}$ belongs to $\Gamma$;\\
  \item $d(Sx,Ty)\preceq \lambda_1(x,y)d(x,y)+\lambda_2(x,y)\frac{d(Sx,x)d(Ty,y)}{1+d(x,y)}+\lambda_3(x,y)\frac{d(Sx,y)d(Ty,y)}{1+d(x,y)}$
\end{enumerate}
Then $S$ and $T$ have a unique common fixed point in $X$.
\end{corollary}
\begin{corollary}
Let $(X,d)$ be a C-complete complex valued metric space and $T:X\rightarrow X$ be a self mappings. If there exists three mappings $\lambda_1$, $\lambda_2$, $\lambda_3 : \mathbb{C}_+\times\mathbb{C}_+  \rightarrow [0,1)$ such that for all $x,y\in X$ the following axioms holds:\\
\begin{enumerate}

  \item  $\lambda_1(x,y)+\lambda_2(x,y)+\lambda_3(x,y)<1$;\\
  \item the mapping $\gamma : \mathbb{C}_+\times\mathbb{C}_+\rightarrow [0,1)$ defined by $\gamma(x,y)=\frac{\lambda_1(x,y)}{1-\lambda_2(x,y)-\lambda_3(x,y)}$ belongs to $\Gamma$;\\
  \item $d(Tx,Ty)\preceq \lambda_1(x,y)d(x,y)+\lambda_2(x,y)\frac{d(Tx,x)d(Ty,y)}{1+d(x,y)}+\lambda_3(x,y)\frac{d(Tx,y)d(Ty,y)}{1+d(x,y)}$
\end{enumerate}
Then $T$ have a unique fixed point in $X$.
\end{corollary}
\end{corollary}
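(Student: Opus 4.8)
The plan is to read off this statement as a specialization of the two-map corollary immediately preceding it, obtained by setting $S=T$. With that single identification, the contractive hypothesis of the two-map corollary,
\[
d(Sx,Ty)\preceq \lambda_1(x,y)d(x,y)+\lambda_2(x,y)\frac{d(Sx,x)\,d(Ty,y)}{1+d(x,y)}+\lambda_3(x,y)\frac{d(Sx,y)\,d(Ty,y)}{1+d(x,y)},
\]
becomes, upon writing $T$ in place of both $S$ and $T$, exactly condition (3) of the present corollary. Conditions (1) and (2)---the bound $\lambda_1(x,y)+\lambda_2(x,y)+\lambda_3(x,y)<1$ and the requirement that $\gamma(x,y)=\lambda_1(x,y)/(1-\lambda_2(x,y)-\lambda_3(x,y))$ belong to $\Gamma$---are identical in the two statements, so they carry over verbatim.

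The two-map corollary then guarantees a unique common fixed point $t$ of $S$ and $T$. Under the identification $S=T$ this common fixed point is precisely a unique fixed point of $T$, which is the asserted conclusion.

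If one prefers to argue directly from the four-map product-type theorem (the analogue of Theorem~\ref{thm31} stated above) rather than through the two-map corollary, the same specialization works by additionally taking $f=g=\mathrm{id}_X$. In that case the only extra clauses to verify are structural: the inclusions $T(X)\subseteq f(X)$ and $S(X)\subseteq g(X)$ reduce to $T(X)\subseteq X$, which holds trivially, and the pairs $(\mathrm{id}_X,T)$ are weakly compatible because the identity commutes with $T$ at every point, in particular at any coincidence point. Since the entire derivation is just a matching of terms after substitution, there is no genuine obstacle; the only step warranting a moment's care is confirming that the weak-compatibility clause is automatically met by the identity map.
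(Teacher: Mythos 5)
Your derivation of the single-map corollary is correct and is exactly what the paper intends: these corollaries are stated without proof as immediate specializations of Theorem 3.2 (the rational-contraction analogue of Theorem \ref{thm31}), and your check that the range inclusions $T(X)\subseteq \mathrm{id}(X)=X$ and the weak compatibility of $(\mathrm{id}_X,T)$ are automatic is the only non-vacuous verification needed. One caveat: the quoted statement is in fact a block of three nested corollaries (a four-map version with constant coefficients $\lambda_1,\lambda_2,\lambda_3\in\mathbb{R}^+$, a two-map version, and the one-map version), and your argument only addresses the last of these. The same substitution scheme disposes of the other two --- take $f=g=\mathrm{id}_X$ for the two-map version, and take the $\lambda_i$ to be constant functions for the four-map version --- but the constant-coefficient case does require one explicit observation you did not make: the resulting $\gamma=\lambda_1/(1-\lambda_2-\lambda_3)$ is a constant strictly less than $1$, so the defining implication of the class $\Gamma$ (namely $\gamma(x_n,y_n)\rightarrow 1 \Rightarrow (x_n,y_n)\rightarrow 0$) holds vacuously, and Theorem 3.2 applies. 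With that supplement the whole block is covered.
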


\section{Urysohn integral equations}
In this section, we show that Theorem 3.1 can be applied to the existence and uniqueness of a common solution of the system of the Urysohn integral equations.
\begin{eqnarray}\label{eq17}
x(t)=\psi_i(t)+\int_{a}^{b}K_i(t,s,x(s))ds
\end{eqnarray}
where $i=1, 2, 3, 4, a,b\in \mathbb{R}$ with $a\leq b, t\in [a,b], z, \psi_i\in C([a,b],\mathbb{R}^n)$ and $K_i:[a,b]\times [a,b]\times \mathbb{R}^n\rightarrow \mathbb{R}^n$ is a given mapping for each $i=1, 2, 3, 4$.\\
Throughout in this section, for each $i=1, 2, 3, 4$ and $K_i$ in equation (\ref{eq17}), we shall use the following symbols;
\begin{eqnarray*}
\delta_i(x(t))=\int_{a}^{b}K_i(t,s,x(s))ds
\end{eqnarray*}

\begin{theorem}
Consider the Urysohn integral equation (\ref{eq17}). Assume that the following given conditions hold for each $t\in [a,b]$;\\
\begin{enumerate}
  \item $\psi_1(t)+\psi_4(t)+\delta_1(t)-\delta_4(\delta_1x(t)+\psi_1(t)+\psi_4(t))=0$
  and $\psi_2(t)+\psi_3(t)+\delta_2x(t)-\delta_3(\delta_2x(t)+\psi_2(t)+\psi_3(t))=0$\\
  \item $\psi_1(t)+3\psi_3(t)+\delta_1(\delta_1x(t)+\psi_1(t))+2\delta_3x(t)+\delta_3(2x(t)-\delta x(t)-\psi_3(t))=4x(t)$ and
   $\psi_2(t)+3\psi_4(t)+\delta_2(\delta_2x(t)+\delta_2(t))+2\delta_4x(t)+\delta_4(2x(t)-\delta_4x(t)-\psi_4(t))=4x(t)$
\end{enumerate}
If there exists three mappings $\lambda_1$, $\lambda_2$, $\lambda_3 : \mathbb{C}_+\times\mathbb{C}_+  \rightarrow [0,1)$ such that, for all $x,y\in (\mathbb{C}_{+}\times\mathbb{C}_{+}) $, satisfying the following:\\
$(i)$ $\lambda_1(x,y)+2\lambda_2(x,y)+2\lambda_3(x,y)<1$;\\
$(ii)$ the mapping $\gamma : \mathbb{C}_+\times\mathbb{C}_+\rightarrow [0,1)$ defined by $\gamma(x,y)=\frac{\lambda_1(x,y)+\lambda_2(x,y)+\lambda_3(x,y)}{1-\lambda_2(x,y)-\lambda_3(x,y)}$ belongs to $\Gamma$;\\
$(iii)$ for each $x,y\in X$ and $t\in [a,b]$, we have \\
\begin{eqnarray*}
A_{xy}(t)\sqrt{1+a^2}e^{itan^{-1}a}&\preceq& \lambda_1\big(E_{x}(t),F_{y}(t)\big) B_{xy}(t)+\lambda_2\big(E_{x}(t),F_{y}(t)\big)C_{xy}(t)
+\lambda_3\big(E_{x}(t),F_{y}(t)\big)D_{xy}(t)
\end{eqnarray*}
where
\begin{eqnarray*}
A_{xy}(t)&=&\biggl|\biggl|\delta_1x(t)+\psi_1(t)-\delta_2x(t)-\delta_2(t)\biggr|\biggr|_{\infty};\\
B_{xy}(t)&=&\biggl(\biggl|\biggl|2x(t)-\delta_3x(t)-\psi_3(t)-2y(t)+\delta_4y(t)+\psi_4(t)\biggr|\biggr| \biggr)_{\infty}\sqrt{1+a^2}e^{itan^{-1}a};\\
C_{xy}(t)&=&\biggl(\biggl|\biggl|2x(t)-\delta_3x(t)-\psi_3(t)-\delta_1x(t)-\psi_2(t)\biggr|\biggr|_{\infty}+\biggl|\biggl|2y(t)-\delta_4y(t)-\psi_4(t)-\delta_2y(t)-\psi_2(t)\biggr|\biggr|_{\infty}\biggr)\sqrt{1+a^2}e^{itan^{-1}a}\\
D_{xy}(t)&=&\biggl(\biggl|\biggl|2x(t)-\delta_3x(t)-\psi_3(t)-\delta_2y(t)-\psi_2(t)\biggr|\biggr|_{\infty}+\biggl|\biggl|2y(t)-\delta_4y(t)-\psi_4(t)-\delta_1x(t)-\psi_1(t)\biggr|\biggr|_{\infty}\biggr)\sqrt{1+a^2}e^{itan^{-1}a}\\
E_{x}(t)&=&\max_{t\in [a,b]}||2x(t)-\delta_3x(t)-\psi_3(t)||_{\infty}\sqrt{1+a^2}e^{itan^{-1}a};\\
F_{y}(t)&=&\max_{t\in [a,b]}||2y(t)-\delta_4y(t)-\psi_4(t)||_{\infty}\sqrt{1+a^2}e^{itan^{-1}a}
\end{eqnarray*}
Then the system of equations (\ref{eq17}) have a unique commom solution.
\end{theorem}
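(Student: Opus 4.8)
The plan is to recast the system (\ref{eq17}) as a common fixed point problem and apply Theorem \ref{thm31}. First I would take $X=C([a,b],\mathbb{R}^n)$ equipped with the complex valued metric
\begin{eqnarray*}
d(x,y)=\Big(\max_{t\in[a,b]}\|x(t)-y(t)\|_{\infty}\Big)\sqrt{1+a^2}\,e^{i\tan^{-1}a},
\end{eqnarray*}
whose values all lie on the ray through $1+ai$ in $\mathbb{C}_+$. Since $|d(x,y)|=\sqrt{1+a^2}\,\|x-y\|_{\infty}$, the notions of convergence and of $C$-Cauchy sequence for $d$ coincide with the corresponding sup-norm notions, so completeness of $C([a,b],\mathbb{R}^n)$ makes $(X,d)$ a $C$-complete complex valued metric space.

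Next I would introduce the four operators dictated by the integrals $\delta_i$,
\begin{eqnarray*}
Sx=\delta_1 x+\psi_1,\qquad Tx=\delta_2 x+\psi_2,\qquad fx=2x-\delta_3 x-\psi_3,\qquad gx=2x-\delta_4 x-\psi_4.
\end{eqnarray*}
With these choices one computes $Sx-Ty=\delta_1 x+\psi_1-\delta_2 y-\psi_2$, $fx-gy=2x-\delta_3 x-\psi_3-2y+\delta_4 y+\psi_4$, and similarly for $(fx-Sx),(gy-Ty)$ and for $(fx-Ty),(gy-Sx)$; taking sup-norms and multiplying by $\sqrt{1+a^2}e^{i\tan^{-1}a}$ identifies $d(Sx,Ty)$, $d(fx,gy)$, $d(fx,Sx)+d(gy,Ty)$ and $d(fx,Ty)+d(gy,Sx)$ with $A_{xy}\sqrt{1+a^2}e^{i\tan^{-1}a}$, $B_{xy}$, $C_{xy}$ and $D_{xy}$ respectively, while $E_x=d(fx,\mathbf 0)$ and $F_y=d(gy,\mathbf 0)$ supply the complex arguments fed to the $\lambda_i$. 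Hypothesis $(iii)$ then reads verbatim as the contractive inequality (3) of Theorem \ref{thm31}, and $(i),(ii)$ are hypotheses (1),(2) of that theorem.

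The real content lies in verifying hypothesis (4) of Theorem \ref{thm31}, and this is exactly what conditions (1),(2) of the present statement deliver. For the range inclusions I would exhibit explicit preimages: given $x$, put $y=\delta_1 x+\psi_1+\psi_4$; the first identity in condition (1), namely $\delta_4(\delta_1 x+\psi_1+\psi_4)=\psi_1+\psi_4+\delta_1 x$, collapses $gy=2y-\delta_4 y-\psi_4$ to $\delta_1 x+\psi_1=Sx$, proving $S(X)\subseteq g(X)$; symmetrically $y=\delta_2 x+\psi_2+\psi_3$ together with the second identity gives $fy=\delta_2 x+\psi_2=Tx$, so $T(X)\subseteq f(X)$. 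For weak compatibility I would read condition (2) as the assertions $SSx=ffx$ and $TTx=ggx$ for all $x$: expanding $ffx=4x-2\delta_3 x-3\psi_3-\delta_3(2x-\delta_3 x-\psi_3)$ and $SSx=\delta_1(\delta_1 x+\psi_1)+\psi_1$ shows the first identity in condition (2) is precisely $SSx=ffx$, and likewise the second is $TTx=ggx$. At any coincidence point, where $Sx=fx=w$, these give $fSx=ffx=SSx=Sfx$, so the pairs $(f,S)$ and $(g,T)$ commute at their coincidence points and are weakly compatible.

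All four hypotheses of Theorem \ref{thm31} being met, it yields a unique common fixed point $u\in X$ with $Su=Tu=fu=gu=u$. Unwinding the definitions, $Su=u$ and $Tu=u$ become $u=\psi_1+\delta_1 u$ and $u=\psi_2+\delta_2 u$, while $fu=u$ and $gu=u$ reduce to $u=\psi_3+\delta_3 u$ and $u=\psi_4+\delta_4 u$; hence $u$ satisfies all four Urysohn equations in (\ref{eq17}) and is the unique such common solution. The step I expect to be the main obstacle is the alignment of hypotheses (1) and (2) with hypothesis (4): one must discover the correct preimages and recognise that the rather opaque identities (1),(2) are nothing but the range-inclusion conditions and the relations $SS=ff$, $TT=gg$ for the particular operators chosen, since the interlocking of $\delta_1,\delta_4$ (and $\delta_2,\delta_3$) with the several $\psi_i$ makes these identifications easy to mis-state.
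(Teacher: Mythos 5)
Your proposal is correct and follows essentially the same route as the paper: the same metric on $C([a,b],\mathbb{R}^n)$, the same four operators $S,T,f,g$, the preimage $Sx+\psi_4$ (resp.\ $Tx+\psi_3$) for the range inclusions, and condition (2) used to force commutativity at coincidence points before invoking Theorem \ref{thm31}. Your reading of condition (2) as the identities $SSx=ffx$ and $TTx=ggx$ is a cleaner packaging of the paper's direct computation of $\|fSx-Sfx\|$, but the substance is identical.
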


\begin{proof}
let $X=C([a,b],\mathbb{R}^n)$ and $d:X\times X\rightarrow\mathbb{C}$ be defined by \\
\begin{eqnarray}\label{eq18}
d(x,y)=\max_{t\in [a,b]}||x(t)-y(t)||_{\infty}\sqrt{1+a^2}e^{itan^{-1}a}
\end{eqnarray}
Then $(X,d)$ be a C-complete complex valued metric space.

Define mappings $S, T, f$ and $g:X\rightarrow X$ by
\begin{eqnarray*}
Sx(t)&=&\delta_1x(t)+\psi_1(t)=\int_{a}^{b}K_1(t,s,x(s))ds+\psi_1(t);\\
Tx(t)&=&\delta_2x(t)+\psi_2(t)=\int_{a}^{b}K_2(t,s,x(s))ds+\psi_2(t);\\
fx(t)&=&2x(t)-\delta_3x(t)-\psi_3(t)=2x(t)-\int_{a}^{b}K_3(t,s,x(s))ds+\psi_3(t);\\
gx(t)&=&2x(t)-\delta_4x(t)-\psi_4(t)=2x(t)-\int_{a}^{b}K_4(t,s,x(s))ds+\psi_4(t);
\end{eqnarray*}
let $x,y\in X$, Then we get
\begin{eqnarray}\label{eq19}
\begin{aligned}
d(fx,0)&=\max_{t\in[a,b]}||2x(t)-\delta_3x(t)-\psi_3(t)||_{\infty}\sqrt{1+a^2}e^{itan^{-1}a};\\
d(gx,0)&=\max_{t\in[a,b]}||2x(t)-\delta_4x(t)-\psi_4(t)||_{\infty}\sqrt{1+a^2}e^{itan^{-1}a};\\
d(Sx,Ty)&=\max_{t\in[a,b]}||\delta_1x(t)+\psi_1(t)-\delta_2y(t)-\psi_2(t)||_{\infty}\sqrt{1+a^2}e^{itan^{-1}a};\\
d(fx,gy)&=\max_{t\in[a,b]}||2x(t)-\delta_3x(t)-\psi_3(t)-2y(t)+\delta_4y(t)-\psi_4(t)||_{\infty}\sqrt{1+a^2}e^{itan^{-1}a};\\
d(fx,Sx)&=\max_{t\in[a,b]}||2x(t)-\delta_3x(t)-\psi_3(t)-\delta_1x(t)-\psi_1(t)||_{\infty}\sqrt{1+a^2}e^{itan^{-1}a};\\
d(gy,Ty)&=\max_{t\in[a,b]}||2y(t)-\delta_4y(t)-\psi_4(t)-\delta_2x(t)-\psi_2(t)||_{\infty}\sqrt{1+a^2}e^{itan^{-1}a};\\
d(fx,Ty)&=\max_{t\in[a,b]}||2x(t)-\delta_3x(t)-\psi_3(t)-\delta_2y(t)i\psi_2(t)||_{\infty}\sqrt{1+a^2}e^{itan^{-1}a};\\
d(gy,Sx)&=\max_{t\in[a,b]}||2y(t)-\delta_4y(t)-\psi_4(t)-\delta_1x(t)-\psi_1(t)||_{\infty}\sqrt{1+a^2}e^{itan^{-1}a}
\end{aligned}
\end{eqnarray}
From condition (iii) of Theorem (4.1), for each $t\in [a,b]$, we have
\begin{eqnarray*}
A_{xy}(t)\sqrt{1+a^2}e^{itan^{-1}a}&\preceq& \lambda_1\big(E_{x}(t),F_{y}(t)\big)B_{xy}(t)+\lambda_2\big(E_{x}(t),F_{y}(t)\big)C_{xy}(t)
+\lambda_3\big(E_{x}(t),F_{y}(t)\big)D_{xy}(t)\\
&\preceq& \lambda_1\big(E_{x}(t),F_{y}(t)\big)\max_{t\in [a,b]} B_{xy}(t)+\lambda_2\big(E_{x}(t),F_{y}(t)\big)\max_{t\in [a,b]}C_{xy}(t)
+\lambda_3\big(E_{x}(t),F_{y}(t)\big)\max_{t\in [a,b]}D_{xy}(t)
\end{eqnarray*}
This implies,
\begin{eqnarray*}
\max_{t\in [a,b]}A_{xy}(t)\sqrt{1+a^2}e^{itan^{-1}a}&\preceq& \lambda_1\big(E_{x}(t),F_{y}(t)\big)\max_{t\in [a,b]}B_{xy}(t)
+\lambda_2\big(E_{x}(t),F_{y}(t)\big)\max_{t\in [a,b]}C_{xy}(t)
+\big(E_{x}(t),F_{y}(t)\big)\max_{t\in [a,b]}D_{xy}(t)\\
\end{eqnarray*}
Using equations (\ref{eq19}), we get
\begin{eqnarray*}
d(Sx,Ty)\preceq \lambda_1(fx,gy)d(fx,gy)+\lambda_2(fx,gy)(d(fx,Sx)+d(gy,Ty))+\lambda_3(fx,gy)(d(fx,Ty)+d(gy,Sx))
\end{eqnarray*}
Next, we shall show that $S(X)\subseteq g(X)$. For this consider
\begin{eqnarray*}
g\bigl(Sx(t)+\psi_4(t)\bigr)&=& 2\bigl[Sx(t)+\psi_4(t)\bigr]-\delta_4\bigl[Sx(t)+\psi_4(t)\bigr]-\psi_4(t)\\
&=& Sx(t)+Sx(t)+\psi_4(t)-\delta_4\bigl(Sx(t)+\psi_4(t)\bigr)\\
&=& Sx(t)+\delta_1x(t)+\psi(t)+\psi_4(t)-\delta_4\bigl(\delta_1x(t)+\psi_1(t)+\psi_4(t)\bigr)\\
&=& Sx(t)+\psi_1(t)+\psi_4(t)+\delta_1x(t)-\delta_4\bigl(\delta_1x(t)+\psi_1(t)+\psi_4(t)\bigr)\\
\end{eqnarray*}
Using given condition (1) of Theorem 4.1, we get $g\bigl(Sx(t)+\psi_4(t)\bigr)=Sx(t)$. which shows that $S(X)\subseteq g(X)$. Similary
we can show that $T(x)\subseteq f(X)$.\\
Next, we shall show that the pair $(S,f)$ and $(T,g)$ are weakly compatible. For each $t\in[a,b]$, we get
\begin{eqnarray}\label{eq20}
\bigl|\bigl|fSx(t)-Sfx(t)\bigr|\bigr|&=&\bigl|\bigl|f\bigl(\delta_1x(t)+\psi_1(t)\bigr)-S\bigl(2x(t)-\delta_3x(t)-\psi_3(t)\bigr|\bigr|\nonumber\\
&=&\bigl|\bigl|2\bigl(\delta_1x(t)+\psi_1(t)\bigr)-\delta_3\bigl(\delta_1x(t)+\psi_1(t)\bigr)-\psi_3(t)-\delta_1\bigl(2x(t)-\delta_3x(t)-\psi_3(t)\bigr)-\psi_1(t)\bigr|\bigr|
\end{eqnarray}
If $Sx=fx$ for some $x\in X$, then we have
\begin{eqnarray*}
\delta_1x(t)+\psi_1(t)=2x(t)-\delta_3x(t)-\psi_3(t) \textit{ for all $t\in [a,b]$ }
\end{eqnarray*}
Therefore from equation (\ref{eq20}), we get
\begin{eqnarray*}
\bigl|\bigl|fSx(t)-Sfx(t)\bigr|\bigr|=\bigl|\bigl|4x(t)-2\delta_3x(t)-3\psi_3(t)-\delta_3\bigl(2x(t)-\delta_3(t)-\psi_3x(t)-\psi_3(t)\bigr)-\delta_1
\bigl(\delta_1x(t)+\psi_1(t)\bigr)-\psi_1(t)\bigr|\bigr|
\end{eqnarray*}
for all $t\in [a,b]$. From condition (2), we get $\bigl|\bigl|fSx(t)-Sfx(t)\bigr|\bigr|=0$, that is, $fSx(t)=Sfx(t)$ for all $t\in [a,b]$. Therefore,
$fSx=Sfx$ whenever $Sx=fx$. Hence the pair $(S,f)$ is weakly compatible. Similarly, we can show that $(T,g)$ is weakly compatible.\\
Thus all the conditions of Theorem (3.1) are satisfied. Therefore there exists a unique common fixed point of $ S,T,f,g$ in $X$ and consequently there exist a unique common solution of the system $(\ref{eq17}).$
\end{proof}


\end{document}